\newtheorem{thm}{Theorem}
\newtheorem{prop}[thm]{Proposition}
\newtheorem{lem}[thm]{Lemma}
\theoremstyle{definition}
\newtheorem{defin}[thm]{Definition}
\newtheorem{exa}[thm]{Example}
\newcommand{\lns}[1]{\left(\ell_{#1}\left(n!\right)\right)_{n\in \mathbb{N}}}
\begin{document}

\baselineskip=17pt

\title{Automaticity of the sequence of the last nonzero digits of $n!$ in a fixed base}

\author{Eryk Lipka}
\address{Graduate of Institute of Mathematics\\ Jagiellonian University\\
Kraków, Poland}
\email{eryklipka0@gmail.com}

\begin{abstract}
In 2011 Deshouillers and Ruzsa (\cite{deshruzsa}) tried to argument that the sequence of the last nonzero digit of $n!$ in base 12 is not automatic. This statement was proved few years later by Deshoulliers in \cite{desh2}. In this paper we provide alternate proof that lets us generalize the problem and give an exact characterization in which bases the sequence of the last nonzero digits of $n!$ is automatic.
\end{abstract}

\keywords{automatic sequence, factorial, the last nonzero digit}
\subjclass[2010]{Primary 11B85; Secondary 11A63, 68Q45, 68R15}

\maketitle
\section{Introduction}
Let $\lns{b}$ be the sequence of last nonzero digits of $n!$ in base $b$, in this paper we will answer the question for which values of $b$ is this sequence automatic. It was known that $\lns{b}$ is automatic in many cases including bases being primes or powers of primes, one can also prove that $\lns{b}$ is automatic for some small bases that have more prime factors, like 6 or 10. In general, for base of the form $b=p_1^{a_1}p_2^{a_2}$ where $p_1\neq p_2,\; p_1, p_2 \in \mathbb{P},\; a_1,a_2\in\mathbb{N}_+$, it can be shown that $\lns{b}$ is automatic when $a_1\left(p_1-1\right)\neq a_2\left(p_2-1\right)$. The smallest base for which the answer is unclear is 12. This was the case analysed by Deshouillers and Ruzsa in \cite{deshruzsa}. They conjectured that $\lns{12}$ can not be automatic, despite the fact that it is equal to some automatic sequence nearly everywhere. An attempt to prove that conjecture was done by Deshouliers in his paper \cite{desh1}, and few years later he answered the question by proving the following, stronger result
\begin{thm}{(Deshouillers \cite{desh2})}
For $a \in \{3, 6, 9\}$, the characteristic sequence of $\{n\, ; \ell_{12}\left(n!\right) = a\}$ is not automatic.
\end{thm}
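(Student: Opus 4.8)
The plan is to first translate the digit condition into an arithmetic inequality via Legendre's formula, and then to locate the obstruction to automaticity in the comparison between the $2$-adic and $3$-adic valuations of $n!$. Write $n!=12^{k}m$ with $12\nmid m$, so that $\ell_{12}(n!)=m\bmod 12$ and $k=\min\bigl(\lfloor v_2(n!)/2\rfloor,\,v_3(n!)\bigr)$. The first observation is that $\ell_{12}(n!)$ is a multiple of $3$ exactly when $3\mid m$, i.e.\ when $v_3(n!)>\lfloor v_2(n!)/2\rfloor$; hence
\[
\{\,n:\ell_{12}(n!)\in\{3,6,9\}\,\}=S:=\bigl\{\,n:\ \lfloor v_2(n!)/2\rfloor< v_3(n!)\,\bigr\}.
\]
On $S$ one has $v_2(m)=v_2(n!)-2\lfloor v_2(n!)/2\rfloor\in\{0,1\}$, and a short case analysis modulo $12$ shows $\ell_{12}(n!)=6$ iff $v_2(n!)$ is odd, while if $v_2(n!)$ is even then $\ell_{12}(n!)\in\{3,9\}$, the two separated by the residue of $m$ modulo $4$. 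By Legendre's formula $v_2(n!)=n-s_2(n)$ and $v_3(n!)=\tfrac12(n-s_3(n))$, where $s_b$ denotes the base-$b$ digit sum, so membership in $S$ is governed by the comparison of $s_2(n)$ and $s_3(n)$, and the three selecting conditions (parity of $v_2(n!)$, and $m\bmod 4$) depend only on the $2$-adic data of $n$ and are $2$-automatic.

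This exhibits base $12$ as the borderline case $a_1(p_1-1)=a_2(p_2-1)$ from the introduction: since $v_2(n!)/2$ and $v_3(n!)$ both grow like $n/2$, membership in $S$ is decided by the lower-order fluctuation $s_2(n)-s_3(n)$, a quantity that genuinely mixes base-$2$ and base-$3$ information. Writing $\chi_a$ for the characteristic sequence of $\{\ell_{12}(n!)=a\}$, the reduction gives $\chi_a=\chi_S\cdot\phi_a$, where $\phi_3,\phi_6,\phi_9$ are pairwise disjoint $2$-automatic selectors determined by the $2$-adic data of $n$. I would therefore reduce the theorem to proving that $S$ is not automatic and that this failure persists after intersecting $S$ with each of the three $2$-automatic classes $\{\phi_a=1\}$.

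The core of the argument, and the step I expect to be the main obstacle, is the non-automaticity of $S$ itself. First I would record that $S$ is not eventually periodic: it is infinite (it contains $2^{j}-1$ for all large $j$, since these maximize $s_2$ while forcing $s_3(2^j-1)\approx j\log_3 2<j$), yet it has density $0$ (on average $s_3(n)>s_2(n)$, so the event $s_2(n)>s_3(n)$ has vanishing frequency), and an infinite set of density $0$ has unbounded gaps, hence cannot be eventually periodic. Suppose toward a contradiction that $\chi_S$ is $d$-automatic for some $d\ge 2$; by Cobham's theorem non-periodicity already forces $d$ to lie in a single multiplicative class, but more importantly I would contradict the finiteness of the $d$-kernel directly. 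The idea is to use the \emph{independence} of base-$2$ and base-$3$ expansions: by choosing integers that agree in their low-order base-$d$ digits (so that the automaton enters the same state) while their remaining digits move $s_2(n)-s_3(n)$ across the threshold $0$ in a prescribed way, one manufactures more pairwise-distinct kernel subsequences than the automaton has states. The delicate point is precisely this controlled crossing: one cannot set the base-$2$ and base-$3$ digits of a single integer freely, so the crossing must be realized through a counting/pigeonhole estimate on the joint distribution of $(s_2,s_3)$ over a suitable arithmetic family. I expect this independence estimate to be the technical heart of the whole proof.

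Finally I would transfer the conclusion from $S$ to each $a\in\{3,6,9\}$ without appealing to any cross-base closure property (which fails when the relevant bases are multiplicatively independent). Because each selector $\phi_a$ is $2$-automatic, it is constant on long base-$2$ patterns, and each class $\{\phi_a=1\}$ is governed only by the $2$-adic data of $n$ and has positive density. Arranging the threshold-crossing family of the previous paragraph to lie entirely inside a fixed class $\{\phi_a=1\}$ makes $\chi_a=\chi_S$ throughout that family, so the same kernel-counting contradiction applies verbatim and shows that none of $\chi_3,\chi_6,\chi_9$ is automatic. The point demanding the most care here is verifying that the crossing construction can be carried out while remaining inside a prescribed $2$-automatic class, which is where the precise control over the $2$-adic selectors from the reduction step pays off.
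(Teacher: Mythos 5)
Your reduction step is sound and actually sharpens what the paper's machinery gives: writing $\{n:\ell_{12}(n!)\in\{3,6,9\}\}=S=\{n:\lfloor v_2(n!)/2\rfloor<v_3(n!)\}$ and splitting $S$ by $2$-automatic selectors (parity of $v_2(n!)=n-s_2(n)$, and $m\bmod 4$, which is computable from the $2$-automatic sequence $\ell_{2^a}(n!)$ of Lemma \ref{bpa} together with $k\bmod 2$) is exactly the kind of supplement needed to pass from ``$3\mid\ell_{12}(n!)$ is not automatic'' to the individual values $a\in\{3,6,9\}$. But the core of your argument --- non-automaticity of $S$ --- is not a proof. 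You place the entire weight on an ``independence estimate on the joint distribution of $(s_2,s_3)$ over a suitable arithmetic family'' that would let you move $s_2(n)-s_3(n)$ across the threshold $0$ while pinning the low-order base-$d$ digits. No such two-sided control is available: the known results in this direction (Senge--Straus, and Stewart's theorem, quoted in the paper as Proposition \ref{sum}) give only a \emph{lower} bound on $s_2(n)+s_3(n)$, and say nothing about realizing prescribed sign patterns of $s_2-s_3$ inside a congruence-and-digit-constrained family. Indeed, even your warm-up claim that $2^j-1\in S$ for all large $j$ because $s_3(2^j-1)\approx j\log_3 2<j$ is only a heuristic: no known result bounds $s_3(2^j-1)$ below $j$, even infinitely often. (That $S$ is infinite does follow from the available tools, but via the opposite one-sided trick: $n=3^d$ has $s_3(n)=1$ while Stewart forces $s_2(n)\to\infty$.)

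The paper's proof (Theorem \ref{sets}) shows how to dodge precisely the obstacle you identified: it never crosses the threshold in both directions, and it never needs joint equidistribution. It pumps with powers of the \emph{larger} prime $p=3$: infinitely many $[3^{c_i}]_k$ drive the automaton into one state $\mathcal{S}$; Lemma \ref{alwaysword} (resting on Proposition \ref{word}, that powers of $3$ realize any prescribed leading digit string in base $k$, plus elementary digit-sum size bounds) appends a suffix turning $3^{c_1}$ into an element of $A_-$, i.e.\ with $s_3$ maximal; and then for the \emph{same} suffix, $s_3(3^{c_i}k^e+f)=s_3(k^e)+s_3(f)$ is constant in $i$ while Stewart's theorem forces $s_2(3^{c_i}k^e+f)\to\infty$, so all but finitely many members of the pumped family lie in $A_+$. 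One-sided monotone control along a single automaton-indistinguishable family replaces your two-sided crossing, and this is the missing idea: as written, your kernel-counting step cannot be completed with any estimate currently in the literature, whereas everything the paper uses is quoted explicitly (Propositions \ref{sum} and \ref{word}). If you keep your selector reduction but replace the heart of your argument by the pumping scheme of Theorem \ref{sets} (arranging, as you suggest in your last paragraph, the pumped family inside a fixed class $\{\phi_a=1\}$, which is possible since the selectors depend on $2$-adic data that can be tracked along the family), you would obtain a complete proof of the individual-value statement.
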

Another way of proving similar fact (but for $a \in \{4,8\}$) was provided recently by Byszewski and Konieczny in \cite{kuba}. It seems that both proofs can be generalized to all cases when $a_1\left(p_1-1\right)=a_2\left(p_2-1\right)$, however it is not obvious if, or how, can it be extended to bases with more than two prime factors. This was our main motivation to write this paper, and we provide complete characterization in which bases is this sequence automatic, including those with many prime factors.

In this paper we will use the following notation: the string of digits of $n$ in base $k$ will be denoted $\left[n\right]_k$, by $v_b\left(n\right)$ we mean the largest integer $t$ such that $b^t|n$, and $s_b\left(n\right)$ is the sum of digits of $n$ in base $b$.
This paper is composed of two main parts, first we recall some basic facts about automatic sequences for readers not familiar with the topic, in the latter part we present our results about automaticity of $\lns{b}$.

We would like to thank Piotr Miska and Maciej Ulas for proof reading and helpful suggestions while preparing this paper.

\section{Basics of automatic sequences}
In this section we will give short summary of topics from automatic sequence theory that we will be using later. If the reader is interested in getting more insight into this topic, we strongly recommend book of Allouche and Shallit \cite{alloucheshallit} that covers all important topics in this area.
\begin{defin}
{\bf Deterministic finite automaton with output} is a 6-tuple $\left(Q, \Sigma, \rho, q_0, \Delta, \tau \right)$ such that
\begin{itemize}
\item$Q$ is a {\bf finite} set of states;
\item$\Sigma$ is an input alphabet;
\item$\rho: Q\times\Sigma \rightarrow Q$ is a transition function;
\item$q_0 \in Q$ is an initial state;
\item$\Delta$ is an output alphabet (finite set);
\item$\tau:Q\rightarrow\Delta$ is an output function.
\end{itemize}
Transition function can be generalized to take strings of characters instead of single ones. For string $s_1s_2s_3\ldots$ we define $\rho\left(q, s_1s_2s_3\ldots\right)=\rho\left(\ldots\rho\left(\rho\left(q, s_1\right), s_2\right)\ldots\right)$.
\end{defin}

\begin{defin}
For any finite alphabet $\Sigma$, function  $f: \Sigma^* \rightarrow \Delta$ is called a {\bf finite-state function} if there exists a deterministic finite automaton with output $\left(Q, \Sigma, \rho, q_0, \Delta, \tau \right)$ such that $f(\omega) = \tau\left(\rho\left(q_0,\omega\right)\right)$.
\end{defin}

\begin{lem} \label{reverse}
If $f: \Sigma^* \rightarrow \Delta$ is a finite-state function then function $g: \Sigma^* \rightarrow \Delta$ defined as $g\left(\omega\right) = f\left(\omega^R\right)$ is also finite-state. ($ ^R$ denotes taking reverse of a word).
\end{lem}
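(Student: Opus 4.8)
The plan is to build, from the automaton computing $f$, a new automaton that simulates reading the input in reverse without ever literally reversing the word. The obstruction is that a deterministic automaton processes its input strictly left to right, and the transition function $\rho$ is in general not invertible, so one cannot simply ``run the machine backwards''. The standard remedy is to remember, after reading a prefix, not a single state but the \emph{entire transformation of the state set} that this prefix induces; since $Q$ is finite there are only $|Q|^{|Q|}$ such transformations, so this bookkeeping is itself finite-state.

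Concretely, let $M = \left(Q, \Sigma, \rho, q_0, \Delta, \tau\right)$ be an automaton realizing $f$, so that $f(\omega) = \tau\left(\rho\left(q_0, \omega\right)\right)$. For each symbol $a \in \Sigma$ write $\rho_a : Q \to Q$ for the map $q \mapsto \rho(q, a)$. I would take as the state set of the new automaton the finite set $Q^Q$ of all functions $T : Q \to Q$, with initial state the identity map $\mathrm{id}$, transition function $\rho'(T, a) := T \circ \rho_a$ (that is, $\rho'(T,a)(q) = T\left(\rho(q,a)\right)$, which is pre-composition by $\rho_a$), and output function $\tau'(T) := \tau\left(T(q_0)\right)$.

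The key claim, to be proved by induction on $|\omega|$, is that $\rho'\left(\mathrm{id}, \omega\right)$ equals the map $q \mapsto \rho\left(q, \omega^R\right)$. The base case $\omega = \varepsilon$ (the empty word) is immediate, since $\varepsilon^R = \varepsilon$ and $\rho'(\mathrm{id},\varepsilon) = \mathrm{id}$. For the inductive step, writing the extended prefix as $\omega a$ and using $(\omega a)^R = a\,\omega^R$, one computes $\rho'(\mathrm{id},\omega a)(q) = \rho'(\mathrm{id},\omega)\left(\rho(q,a)\right) = \rho\left(\rho(q,a), \omega^R\right) = \rho\left(q, a\,\omega^R\right) = \rho\left(q, (\omega a)^R\right)$, which is exactly the required identity. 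Feeding this into the output function then gives $\tau'\left(\rho'(\mathrm{id},\omega)\right) = \tau\left(\rho(q_0,\omega^R)\right) = f\left(\omega^R\right) = g(\omega)$, so the automaton $\left(Q^Q, \Sigma, \rho', \mathrm{id}, \Delta, \tau'\right)$ computes $g$, and $g$ is finite-state.

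The step I expect to be the main (if modest) obstacle is fixing the composition order correctly: one must be careful that extending the prefix on the \emph{right} corresponds to prepending a symbol to the reversed word, and hence to pre-composition by $\rho_a$ rather than post-composition. Once the update rule $\rho'(T,a) = T \circ \rho_a$ is pinned down and the initial state is taken to be $\mathrm{id}$, the induction is entirely routine, and finiteness of $Q^Q$ does the rest.
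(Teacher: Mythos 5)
Your proof is correct and takes essentially the same approach as the paper: both maintain, as the state after reading a prefix $\omega$, the function induced by $\rho(\cdot,\omega^R)$, updated by pre-composition and verified by the same induction. The only (cosmetic) difference is that you store raw transformations in $Q^Q$ and apply $\tau$ in the output function, while the paper stores the composites $\tau\circ\rho(\cdot,\omega^R)$ as elements of $\Delta^Q$ with initial state $\tau$.
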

\begin{proof}
(Sketch) Let $\left(Q, \Sigma, \rho, q_0, \Delta, \tau \right)$ be automaton that is related to $f$, we will define another automaton $\left(Q', \Sigma, \rho', q_0', \Delta, \tau' \right)$. Let $Q' = \Delta^Q$ be all functions from $Q$ to $\Delta$ and $q_0'\equiv \tau$. For any $g\in Q'$ we define $\tau'\left(g\right)=g\left(q_0\right)$, and for any $\sigma \in \Sigma, q\in Q$ we put $\rho'\left(g,\sigma\right)\left(q\right)=g\left(\rho\left(q,\sigma\right)\right)$. By induction on length of word $\omega\in\Sigma^*$ one can prove that equation 
$$\rho'\left(g,\omega\right)\left(q\right)=g\left(\rho\left(q,\omega^R\right)\right)$$
holds for any $g \in Q', q\in Q$. And finally $$g\left(\omega\right)=\tau'\left(\rho'\left(q_0',\omega\right)\right)=\rho'\left(q_0',\omega\right)\left(q_0\right)=g\left(\rho\left(q_0,\omega^R\right)\right)=f\left(\omega^R\right).$$
\end{proof}

\begin{defin}
$\left(a\left(n\right)\right)_{n\in\mathbb{N}}$ is an {\bf $k$-automatic sequence} if function $\left[n\right]_k \rightarrow a_n$ is finite-state. By Lemma \ref{reverse} it is not important whether we read representation of $n$ from the right or from the left side.
\end{defin}

Now we present some simple examples of sequences that are automatic.
\begin{exa} \label{simpleexa}
The sequence $a_n = n \pmod{m}$ is $k$-automatic for any $k\geq 2, m\in \mathbb{Z}_+$. In order to see this it is enough to take $Q=\left\{0,1,\ldots,m-1\right\}, \rho\left(q,\sigma\right) = kq+\sigma \pmod{m}$ and read input "from left to right".\\
The sequence $a_n = s_k\left(n\right) \pmod{m}$ is $k$-automatic for any $k\geq 2, m\in \mathbb{Z}_+$. Take $Q=\left\{0,1,\ldots,m-1\right\}$ and $\rho\left(q,\sigma\right) = q+\sigma \pmod{m}$.\\
For any $k\geq 2$ and $x\in \mathbb{N}$, the characteristic sequence $a_n = \delta_x\left(n\right)$ is $k$-automatic. Automaton that computes it can be constructed by taking $\lceil \log_k\left(x\right) \rceil$ states that count how many digits were correct plus one "sinkhole" state that accepts all numbers other than $x$.
\end{exa}
We can also obtain automatic sequences by modifying existing ones.
\begin{exa} 
If $\left(a\left(n\right)\right)_{n\in\mathbb{N}}$ is $k$-automatic sequence then so is $b_n = f(a_n)$ for any function $f$ taking values from the image of $a_n$. The difference will be only in the output function of related automaton.\\
If $\left(a\left(n\right)\right)_{n\in\mathbb{N}}, \left(b\left(n\right)\right)_{n\in\mathbb{N}}$ are $k$-automatic sequences, then so is $c_n = f(a_n, b_n)$ for any function $f$ as long as it is well defined on all possible pairs $(a_n, b_n)$. To obtain such automaton $\left(Q_c, \Sigma, \rho_c, q_c, \Delta_c, \tau_c \right)$ we can take the "product" of automatons $\left(Q_a, \Sigma, \rho_a, q_a, \Delta_a, \tau_a \right)$, $\left(Q_b, \Sigma, \rho_b, q_b, \Delta_b, \tau_b \right)$ defined by
\begin{itemize}
\item $Q_c=Q_a \times Q_b$;
\item $\rho_c\left(a,b\right) = \left(\rho_a\left(a\right),\rho_b\left(b\right)\right)$;
\item $q_c = \left(q_a,q_b\right)$;
\item $\Delta_c=f(\Delta_a\times \Delta_b)$;
\item $\tau_c\left(a,b\right) = f\left(\tau_a\left(a\right),\tau_b\left(b\right)\right)$.
\end{itemize}
This can be easily generalized to the case with $f$ taking any finite number of sequences as an input.
\end{exa}

By combining above examples together we can get some additional facts.
\begin{lem} Let $k\in \mathbb{N}_{\geq2}$ be fixed, then:
\begin{itemize}
\item characteristic sequence of a finite set is $k$-automatic;
\item if sequence $\left(a\left(n\right)\right)_{n\in\mathbb{N}}$ differs from $\left(b\left(n\right)\right)_{n\in\mathbb{N}}$ only on finitely many terms and one of them is $k$-automatic so does the other one;
\item periodic sequence is $k$-automatic;
\item ultimately periodic sequence is $k$-automatic;
\end{itemize}
\end{lem}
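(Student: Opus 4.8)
The plan is to derive all four statements from the building blocks already assembled, namely the automaticity of the sequences $\delta_x(n)$ and $n \bmod m$ in Example \ref{simpleexa} together with the two closure principles established just above: that applying an arbitrary function $f$ to (a tuple of) $k$-automatic sequences again yields a $k$-automatic sequence. I would treat the bullets not in the order listed but in order of logical dependence: first the characteristic function of a finite set, then periodic sequences, then the finite-modification statement, and finally ultimately periodic sequences as a corollary of the previous two.

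For a finite set $S=\{x_1,\dots,x_r\}$, I would write its characteristic sequence as $f\bigl(\delta_{x_1}(n),\dots,\delta_{x_r}(n)\bigr)$, where $f$ is the Boolean OR (equivalently $f(\epsilon_1,\dots,\epsilon_r)=\max_i \epsilon_i$). Each $\delta_{x_i}$ is $k$-automatic by Example \ref{simpleexa}, and since $f$ is a well-defined function on the finitely many possible argument tuples, the product-automaton construction gives automaticity. For a periodic sequence of period $m$, I would observe that it is by definition a function of the residue $n \bmod m$; since $(n \bmod m)_{n}$ is $k$-automatic by Example \ref{simpleexa}, composing with that function (taking values in the one period) produces the desired automatic sequence.

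For the finite-modification statement, suppose $(a(n))$ is $k$-automatic and $(b(n))$ agrees with it outside a finite set $F=\{x_1,\dots,x_r\}$. I would form the $k$-automatic sequence $\bigl(a(n),\delta_{x_1}(n),\dots,\delta_{x_r}(n)\bigr)$ and apply the output function that returns the prescribed value $b(x_i)$ whenever the $i$-th indicator equals $1$, and returns $a(n)$ otherwise; this function is well defined on all occurring tuples because the points of $F$ are distinct, so at most one indicator is ever nonzero. The symmetric argument handles the case where $(b(n))$ rather than $(a(n))$ is assumed automatic. Finally, an ultimately periodic sequence differs from a genuinely periodic one on a finite initial segment, so it is $k$-automatic by combining the periodic case with the finite-modification case. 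I expect no serious obstacle here; the only point requiring care is checking well-definedness of the override function in the finite-modification step, which is exactly why I record the distinctness of the points of $F$.
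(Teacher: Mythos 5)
Your proposal is correct and follows exactly the route the paper intends: the paper states this lemma without a written proof, presenting it as an immediate combination of Example \ref{simpleexa} (the sequences $\delta_x(n)$ and $n \bmod m$) with the closure-under-functions constructions, which is precisely what you spell out. Your attention to well-definedness of the override function in the finite-modification step, and the derivation of the ultimately periodic case from the periodic and finite-modification cases, fills in the details the paper leaves implicit, with no gaps.
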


Of course this does not exhaust all possible automatic sequences, but is enough to give some insight and be useful in our work. We should also notice what is the relation between automaticity in different bases.

\begin{lem} \label{automatonpower}
Sequence $\left(a\left(n\right)\right)_{n\in\mathbb{N}}$ is $k$-automatic if and only if it is $k^m$-automatic for all $m\in\mathbb{N}_{\geq 2}$.
\end{lem}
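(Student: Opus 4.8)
The plan is to prove both implications by explicitly converting automata, grouping base-$k$ digits into blocks of length $m$ to pass between base $k$ and base $k^m$. Throughout I would use Lemma~\ref{reverse} freely, so as to read representations from whichever end is convenient, and I would first arrange that every automaton involved ignores leading zeros: given a DFAO $\left(Q,\Sigma,\rho,q_0,\Delta,\tau\right)$ for a sequence, I would adjoin a fresh initial state $q_0'$ with $\rho\left(q_0',0\right)=q_0'$ and $\rho\left(q_0',\sigma\right)=\rho\left(q_0,\sigma\right)$ for $\sigma\neq 0$, so that prepending zeros to an input never changes the output. This normalization is what makes block alignment harmless, and I expect it to be the one genuinely fiddly point of the argument.

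For the forward implication ($k$-automatic $\Rightarrow$ $k^m$-automatic for each $m$): starting from a leading-zero-invariant automaton $M=\left(Q,\{0,\dots,k-1\},\rho,q_0,\Delta,\tau\right)$ for $a$ read most-significant-digit first, I would build $M'$ over the alphabet $\{0,\dots,k^m-1\}$ with the same states, initial state, and output function, and with transition $\rho'\left(q,d\right)=\rho\left(q,w_d\right)$, where $w_d$ is the length-$m$ base-$k$ expansion of the digit $d$ (padded with leading zeros). Concatenating the blocks $w_d$ over the base-$k^m$ digits of $n$ reproduces $\left[n\right]_k$ up to leading zeros inside the top block; since $M$ ignores leading zeros, $\rho'\left(q_0,\left[n\right]_{k^m}\right)=\rho\left(q_0,\left[n\right]_k\right)$, so the outputs agree. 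Doing this for every $m\geq 2$ gives one half of the equivalence.

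For the backward implication it suffices to show that $k^m$-automaticity implies $k$-automaticity for a single $m\geq 2$. Here I would read everything least-significant-digit first (via Lemma~\ref{reverse}), so that from an automaton $M'$ for $a$ in base $k^m$ I can construct a base-$k$ automaton that accumulates incoming base-$k$ digits in a buffer: its states are pairs consisting of a state of $M'$ together with a buffer holding fewer than $m$ pending base-$k$ digits. Whenever the buffer reaches length $m$, the automaton evaluates that block as a base-$k^m$ digit, applies $\rho'$, and empties the buffer. The output function, applied in a state carrying a partial buffer, pads the buffer with zeros to length $m$, performs one last $\rho'$-step, and then applies $\tau'$; this is what handles the most significant block, which in general is incomplete. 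The state set is finite, of size at most $|Q'|\cdot\sum_{j=0}^{m-1}k^j$, so this yields a DFAO computing $a$ in base $k$.

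The main obstacle, as flagged above, is purely bookkeeping: controlling the most significant block, whose base-$k$ length need not be a multiple of $m$, and checking that the leading-zero normalization forces the two groupings to agree on the actual integer $n$ rather than merely on some padded representation of it. Once that alignment is verified, both conversions reduce to routine inductions on word length of exactly the kind carried out in Lemma~\ref{reverse}.
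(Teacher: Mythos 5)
Your proposal is correct and follows essentially the same route as the paper's sketch: in the forward direction you compose the base-$k$ transition function over the $m$ base-$k$ digits of each base-$k^m$ digit, and in the backward direction you buffer incoming base-$k$ digits until a full block of $m$ can be fed to the base-$k^m$ automaton, which is exactly the paper's state set $Q\times\{0,\ldots,k^{m-1}-1\}\times\{0,\ldots,m-1\}$ in different clothing. The only difference is one of care, in your favor: you explicitly handle the incomplete most significant block via leading-zero normalization and LSD-first reading (through Lemma~\ref{reverse}), a bookkeeping point the paper's sketch passes over in silence.
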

\begin{proof} (Sketch)
If we have $k$-automaton generating a sequence, then we can easily manipulate it to create $k^m$-automaton generating the same sequence, main idea is to take transition function to be $m$-th composition of the original transition function with itself (digit in base $k^m$ can be seen as $m$ digits in base $k$). \\
On the other hand, let $Q$ be set of states of the $k^m$-automaton generating a sequence, and $\rho$ be its transition function. We take $Q' = Q\times\{0,1,\ldots,k^{m-1}-1\}\times\{0,1,\ldots,m-1\}$ and
$$\rho'\left(\left(q,r,s\right),\sigma\right)=\begin{cases}
(q,kr+\sigma,s+1) & \text{if } s<m-1\\
(\rho(q,kr+\sigma),0,0) & \text{if } s=m-1\end{cases},$$
this way we accumulate base $k$ digits until we collect $m$ of them and then use the original transition function.
\end{proof}

\section{New results}
Lets start with some facts that we will be using in our proof
\begin{prop}(Legendre's formula \cite{legendre}) \label{legendre}
for any prime $p$ and positive integers $a,n$, we have
$$
v_{p^a}(n!) = \left\lfloor\frac{n-s_p\left(n\right)}{a\left(p-1\right)}\right\rfloor.
$$
\end{prop}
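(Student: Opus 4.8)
The plan is to reduce the statement to the classical Legendre formula for the ordinary $p$-adic valuation and then account for the exponent $a$ by a single integer division. First I would observe that for any positive integers $m$ and $a$ one has $v_{p^a}(m) = \lfloor v_p(m)/a \rfloor$: indeed $p^{at} \mid m$ holds precisely when $at \le v_p(m)$, so the largest admissible $t$ is $\lfloor v_p(m)/a \rfloor$. Applying this with $m = n!$ reduces the problem to computing the ordinary valuation $v_p(n!)$.

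Next I would establish the well-known identity $v_p(n!) = (n - s_p(n))/(p-1)$. The starting point is that counting, for each $i$, the multiples of $p^i$ in $\{1, 2, \ldots, n\}$ gives the finite sum $v_p(n!) = \sum_{i \ge 1} \lfloor n/p^i \rfloor$. Writing $n$ in base $p$ as $n = \sum_{j \ge 0} d_j p^j$, I would use $\lfloor n/p^i \rfloor = \sum_{j \ge i} d_j p^{j-i}$ and interchange the order of summation to obtain $\sum_{j \ge 1} d_j (p^j - 1)/(p-1)$, which collapses to $(n - s_p(n))/(p-1)$ since $\sum_j d_j p^j = n$ and $\sum_j d_j = s_p(n)$.

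Finally I would combine the two steps. Because $v_p(n!)$ is an integer equal to $(n - s_p(n))/(p-1)$, substituting this into $v_{p^a}(n!) = \lfloor v_p(n!)/a \rfloor$ and absorbing the factor $a$ into the denominator yields the claimed formula $\lfloor (n - s_p(n))/(a(p-1)) \rfloor$.

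The individual steps are all standard, so I expect no serious obstacle; the only point that genuinely deserves care is the interplay of the two floor operations. This is clean precisely because $v_p(n!)$ is already an integer, so that $\lfloor v_p(n!)/a \rfloor = \lfloor (n - s_p(n))/(a(p-1)) \rfloor$ with no rounding ambiguity. The temptation to be avoided is distributing a floor through the division in the wrong order, which would not in general commute; here it is legitimate only because the inner quantity $(n-s_p(n))/(p-1)$ is exactly integral.
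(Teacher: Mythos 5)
Your proof is correct. There is nothing in the paper to compare it against: the proposition is quoted as a classical result with a citation to Legendre, and no proof is given in the text. Your derivation is the standard one — the counting formula $v_p(n!)=\sum_{i\geq 1}\left\lfloor n/p^i\right\rfloor$, the digit-by-digit interchange of summation yielding the exact identity $v_p(n!)=\left(n-s_p(n)\right)/(p-1)$, and the elementary reduction $v_{p^a}(m)=\left\lfloor v_p(m)/a\right\rfloor$ — and all three steps check out, including the convergence bookkeeping (all sums are finite) and the vanishing of the $j=0$ term in the collapse to $n-s_p(n)$. One minor remark on your closing caveat: the interplay of the two floors is even less delicate than you suggest, since the nested-floor identity $\left\lfloor \lfloor y\rfloor /a\right\rfloor=\lfloor y/a\rfloor$ holds for every real $y$ and every positive integer $a$, so the final substitution would be legitimate even without integrality; but since you establish that the inner quantity $\left(n-s_p(n)\right)/(p-1)$ is exactly an integer, your version of the step is perfectly sound as written.
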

\begin{prop} \label{sum} (Result from \cite{stewart})
For any positive integers $b, c$ such that $\frac{\ln(b)}{\ln(c)}\not\in\mathbb{Q}$ there exists a constant $d$ such that for each integer $n>25$ there holds
$$
s_b\left(n\right) + s_{c}\left(n\right) > \frac{\log\log n}{\log \log \log n + d}-1.
$$
\end{prop}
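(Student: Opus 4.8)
Because this statement is quoted from Stewart \cite{stewart}, the plan is to reconstruct the argument underlying it, whose engine is the theory of linear forms in logarithms (Baker's method). Note first that the hypothesis $\frac{\ln b}{\ln c}\notin\mathbb{Q}$ is exactly multiplicative independence of $b$ and $c$, since $b^x=c^y$ would force $\frac{\ln b}{\ln c}=\frac{y}{x}$; this is the feature that will guarantee that the relevant linear forms never vanish. I would handle the finitely many $n$ below any absolute threshold trivially and argue asymptotically.

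First I would pass to the contrapositive: fix $n$ and suppose that $k:=s_b(n)+s_c(n)$ is small, the aim being to bound $n$ from above in terms of $k$. Writing the two expansions
$$n=\sum_{i=1}^{r} u_i\,b^{p_i}=\sum_{j=1}^{s} w_j\,c^{q_j},\qquad p_1>\dots>p_r\ge 0,\ q_1>\dots>q_s\ge 0,$$
the number of nonzero digits satisfies $r\le s_b(n)$ and $s\le s_c(n)$, so $r+s\le k$. Having few nonzero digits forces long runs of zeros: the $r$ nonzero base-$b$ positions cut the digit range $[0,p_1]$ into at most $r+1$ zero-blocks whose lengths sum to about $\log_b n$, so some gap between consecutive nonzero base-$b$ digits has length $g\gtrsim \frac{\log_b n}{r}$, and likewise in base $c$.

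The key step is to convert such a long gap into a nonzero but extremely small linear form in $\log b$ and $\log c$. A zero run of length $g$ ending at position $v$ lets us write $n=Ub^{v}+L$ with $L<b^{\,v-g}$ and $U$ having a nonzero bottom digit, so $n$ agrees with $Ub^{v}$ to relative precision $b^{-g}$; doing the same on the base-$c$ side and comparing the two high parts produces a quantity of the shape $\Lambda=v\log b-v'\log c+\log\eta$, with $\eta$ a rational of controlled height built from the leading digits, which is nonzero by multiplicative independence yet satisfies $|\Lambda|<\exp(-c_0 g)$ because the two representations match so precisely. Baker's lower bound for linear forms in logarithms then gives $|\Lambda|>\exp(-C\log A_1\log A_2\cdots)$ with only polylogarithmic dependence on the heights, hence on $\log n$. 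Comparing the two estimates forces every gap length to be $O_{b,c}(\log\log n)$.

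Finally I would combine this gap bound with the counting from the second step: since the total digit range is about $\log n$ and there are $O(k)$ gaps each of length $O(\log\log n)$, one gets an inequality of the form $\log n\le(\text{number of blocks})\cdot(\text{gap bound})$ which, solved for $k$, yields $k\gtrsim\frac{\log\log n}{\log\log\log n+d}$. The main obstacle is precisely this quantitative bookkeeping: extracting the exact shape $\frac{\log\log n}{\log\log\log n+d}$ requires carefully tracking how the heights and the constant in Baker's theorem propagate when the linear-forms estimate is iterated across successive blocks, rather than any single clean application of the transcendence bound. The transcendence input itself (Baker's theorem) is taken as a black box, and everything else is combinatorial packaging of the two digit expansions around it.
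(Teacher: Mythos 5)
First, a framing point: the paper does not prove this proposition at all — it imports it verbatim from Stewart \cite{stewart} and uses it as a black box — so the only meaningful comparison is with Stewart's original argument, which your sketch tries to reconstruct. Your architecture is indeed Stewart's: few nonzero digits force long zero-runs, a long run in each base turns the coincidence of the two expansions into a nonzero linear form $\Lambda = v\log b - v'\log c + \log\eta$ that is exponentially small in the gap length, nonvanishing follows from multiplicative independence, and Baker's lower bound closes the loop. Up to that point the proposal is faithful to the source.

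The genuine gap is in the quantitative middle step, and it is not deferrable bookkeeping. You claim Baker's bound ``forces every gap length to be $O_{b,c}(\log\log n)$.'' That is false as stated: in the decomposition $n = Ub^{v}+L$ the height of $U$ (hence of the rational $\eta$) is not bounded — $\log U$ grows like the distance $D$ from the gap down to the leading digit — so Baker's inequality only gives $|\Lambda| > \exp\left(-C\,D\log\log n\right)$ for a gap at depth $D$, i.e.\ a gap bound $O\left(D\log\log n\right)$ that degrades as you descend through the expansion. The correct iteration is multiplicative: if $D_t$ denotes the depth after $t$ blocks of nonzero digits, one gets $D_{t+1}\le C\,D_t\log\log n$, hence after $k\approx s_b(n)+s_c(n)$ blocks $\log n \le \left(C\log\log n\right)^{k}$, and taking logarithms yields precisely $k \ge \frac{\log\log n}{\log\log\log n + d}-1$. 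Your own accounting betrays the problem: the inequality you write, $\log n \le (\text{number of blocks})\cdot O(\log\log n)$, would give $k \gg \log n/\log\log n$ — vastly stronger than Stewart's theorem and than anything known — yet you then assert the much weaker stated bound, a non sequitur. Replacing the uniform gap bound by the depth-dependent one, and the additive count by the multiplicative recursion, is the actual content of Stewart's proof rather than a detail of constant-tracking.
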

Next proposition is known fact, but We haven't found it clearly stated anywhere, it can be easily proven using Dirichlet's approximation theorem or Equidistribution theorem.
\begin{prop} \label{word} For any positive integers $a, b, c$ such that $\frac{\ln(b)}{\ln(c)}\not\in\mathbb{Q}$ there exist infinitely many triples of non-negative integers $d,e,f$ with $1\leq f<b^e$ such that
$$
c^d = a\cdot b^e +f.
$$
In other words, there are infinitely many powers of $c$ with base $b$ notation starting with given string of digits. 
\end{prop}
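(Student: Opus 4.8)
The plan is to translate the statement into a question about the distribution of the sequence $(d\log_b c)_{d\in\mathbb{N}}$ modulo $1$. First I would rewrite the desired identity: the conditions $c^d = a\cdot b^e + f$ and $1\le f < b^e$ together are equivalent to the double inequality $a\cdot b^e < c^d < (a+1)\cdot b^e$. Dividing by $b^e$ and taking logarithms in base $b$, this becomes
$$\log_b a < d\log_b c - e < \log_b(a+1).$$
Writing $\alpha = \log_b c$, the task is thus to produce infinitely many pairs $(d,e)$ of nonnegative integers for which $d\alpha - e$ falls in the fixed open interval $\bigl(\log_b a,\ \log_b(a+1)\bigr)$. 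Note first that $\alpha$ is irrational: its reciprocal is $\tfrac{\ln b}{\ln c}$, which is irrational by hypothesis (here $b,c\ge 2$, so the logarithms are nonzero and $\alpha$ is well defined and positive).

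Set $\delta = \log_b(a+1) - \log_b a = \log_b\!\left(1+\tfrac1a\right)$, so that $0 < \delta \le \log_b 2 \le 1$. For a given $d$ the correct choice of $e$ is forced to be $e = \lfloor d\alpha - \log_b a\rfloor$, and then $d\alpha - e$ lies in the target interval precisely when the fractional part $\{d\alpha - \log_b a\}$ lies in $(0,\delta)$. So it suffices to show that $\{d\alpha - \log_b a\}\in(0,\delta)$ for infinitely many $d$. Once such a $d$ is found, I would set $e=\lfloor d\alpha-\log_b a\rfloor$ and $f = c^d - a\,b^e$, and the computation above guarantees $1\le f<b^e$; moreover, since $c^d\to\infty$, the integer $e$ is positive for all large $d$, so the nonnegativity of $e$ is automatic.

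The one substantive ingredient is the Equidistribution theorem: because $\alpha$ is irrational, the sequence $(d\alpha)_{d\in\mathbb{N}}$ is equidistributed modulo $1$, and hence so is the constant shift $(d\alpha - \log_b a)_{d\in\mathbb{N}}$. Equidistribution forces the proportion of indices $d\le N$ with $\{d\alpha-\log_b a\}\in(0,\delta)$ to tend to $\delta>0$, so in particular that interval is hit infinitely often, which completes the argument. (If one prefers to invoke only Dirichlet's approximation theorem, the same conclusion follows from the classical fact that the irrationality of $\alpha$ makes the points $\{d\alpha\}$ dense in $[0,1)$; since these points are pairwise distinct, any subinterval of positive length contains infinitely many of them.) The proof is essentially bookkeeping around this distribution statement; the only point needing a little care is the use of the \emph{open} interval, which underwrites the strict inequalities $1\le f$ and $f<b^e$ and lets us disregard the measure-zero possibility that some $c^d$ equals $a\,b^e$ or $(a+1)b^e$ exactly. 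Thus the genuine difficulty, such as it is, lies entirely in having the irrationality of $\alpha$ available to feed into equidistribution.
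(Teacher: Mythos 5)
Your proof is correct, and it follows precisely the route the paper indicates: the paper leaves Proposition \ref{word} unproved, remarking only that it ``can be easily proven using Dirichlet's approximation theorem or Equidistribution theorem,'' which is exactly the equidistribution argument (with the Dirichlet-based density alternative) that you carry out. The reduction to $\{d\log_b c - \log_b a\}\in(0,\log_b(1+\tfrac1a))$, the irrationality of $\log_b c$ from the hypothesis, and the endpoint/positivity bookkeeping are all handled correctly, so nothing further is needed.
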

After such introduction we can finally state our results. The following lemma and theorem are the main steps in proving when $\lns{b}$ is not automatic.

\begin{lem} \label{alwaysword}
Let $P$ be a non-empty finite set of prime numbers and $p$ be its biggest element. Let $a>0, k>1$ be integers. Then there exist an integer $a'$ such that $\max_{i\in P}\left\{s_i\left(a'\right)\right\}=s_p\left( a'\right)$ and $\left[a\right]_k$ is prefix of $\left[a'\right]_k$.
\end{lem}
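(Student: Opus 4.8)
The plan is to look for $a'$ among the integers whose base-$k$ expansion begins with $[a]_k$, i.e.\ in an interval of the form $I_m=\left[a\,k^m,(a+1)k^m\right)$. Any $a'\in I_m$ can be written $a'=a\,k^m+r$ with $0\le r<k^m$, so its base-$k$ word is $[a]_k$ followed by the $m$ digits of $r$ (padded with leading zeros); hence the prefix condition holds automatically, and the whole problem reduces to choosing $m$ and $r$ so that $s_p(a')$ is the largest of the digit sums $s_i(a')$, $i\in P$. (If $P=\{p\}$ the statement is vacuous and $a'=a$ works.) The guiding idea is that for a number of size $N$ the maximal possible base-$i$ digit sum grows like $\frac{i-1}{\ln i}\ln N$, and this rate increases with $i$, so the largest prime $p$ should dominate once the numbers are large.

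First I would record the elementary upper bound $s_i(N)\le (i-1)(1+\log_i N)$, valid for every base $i$ and every positive $N$, since $N$ has at most $1+\log_i N$ base-$i$ digits, each at most $i-1$. Next I would manufacture a matching lower bound for $s_p$: setting $j=\lfloor m\log_p k\rfloor$ gives $p^{\,j}\le k^m$, so the interval $I_m$, having length $k^m\ge p^{\,j}$, contains an integer $a'\equiv -1\pmod{p^{\,j}}$. For that $a'$ the lowest $j$ base-$p$ digits all equal $p-1$, whence $s_p(a')\ge j(p-1)$.

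With this particular $a'$ fixed, the two estimates read, for each $i\in P$ with $i<p$,
$$
s_i(a')\le (i-1)\left(1+\log_i\!\big((a+1)k^m\big)\right)=\frac{i-1}{\ln i}\,m\ln k+O(1),
$$
while
$$
s_p(a')\ge j(p-1)\ge \frac{p-1}{\ln p}\,m\ln k-(p-1).
$$
The crux is that $g(x)=\frac{x-1}{\ln x}$ is strictly increasing for $x\ge 2$ (the sign of $g'$ is that of $\ln x-1+\tfrac1x$, which is positive and increasing on $[2,\infty)$). As $p$ is the largest element of $P$, we have $g(p)>g(i)$ for every other $i\in P$, so the leading coefficient of the lower bound for $s_p(a')$ strictly exceeds that of each upper bound for $s_i(a')$. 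Choosing $m$ large enough therefore makes $s_p(a')>s_i(a')$ for all $i\in P\setminus\{p\}$ simultaneously, which is exactly $\max_{i\in P}s_i(a')=s_p(a')$.

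I expect the only real content to lie in the second step—forcing the low base-$p$ digits of some element of the prescribed interval to be maximal through a single congruence condition—together with the monotonicity of $g$; granting these, the comparison is a routine asymptotic in $m$. The remaining points to check are merely bookkeeping: that the $O(1)$ terms are genuinely independent of $m$, and that finitely many small values of $m$ (or the degenerate case $P=\{p\}$) cause no trouble.
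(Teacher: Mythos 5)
Your proof is correct, and it takes a genuinely different and more elementary route than the paper's. The paper constructs $a'$ as $p^d-1$, invoking Proposition \ref{word} (infinitely many powers of $p$ begin with the digit string $\left[a\right]_k$ in base $k$), which rests on Dirichlet approximation or equidistribution and requires $\ln k/\ln p\notin\mathbb{Q}$; it must therefore treat $k=p^t$ as a separate case, where it takes $a'=a\cdot p^{td}+p^{td}-1$. Your construction --- choose $a'$ in $I_m=\left[a\,k^m,(a+1)k^m\right)$ with $a'\equiv -1 \pmod{p^{j}}$, $j=\lfloor m\log_p k\rfloor$, which exists by pigeonhole because the interval contains $k^m\geq p^{j}$ consecutive integers --- is in effect the paper's second case generalized to arbitrary $k$: instead of making \emph{all} base-$p$ digits equal to $p-1$, you force only the trailing $j$ of them, which is all the freedom the prescribed base-$k$ prefix leaves, and it is enough since those $j$ digits constitute all but $O(1)$ of the base-$p$ digits of $a'$. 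From there both proofs conclude identically, comparing the rate $\frac{p-1}{\ln p}$ against $\frac{i-1}{\ln i}$ via strict monotonicity of $x\mapsto\frac{x-1}{\ln x}$ (the paper phrases this as the negativity of $\left(\frac{q-1}{\ln q}-\frac{p-1}{\ln p}\right)\ln\left(p^d-1\right)+q-1$ for large $d$), and your bookkeeping of the $m$-independent $O(1)$ terms is sound. What the paper's route buys is the exact identity $s_p(a')=d(p-1)$ and a reuse of Proposition \ref{word}, already stated for other purposes; what yours buys is self-containedness (no appeal to Proposition \ref{word} or any irrationality hypothesis), no case split on whether $k$ is a power of $p$, and a mildly stronger conclusion: \emph{every} sufficiently long prefix-extension interval $I_m$ contains a valid $a'$, rather than just some extension of the given prefix.
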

\begin{proof}
If $k$ is not a power of $p$, then by Proposition \ref{word} there exist infinitely many triples $(d,e,f)$ of non-negative integers with $1\leq f+1<k^e$ such that
$$p^d = a\cdot k^e + (f+1).$$
Furthermore we have
$$s_p\left(p^d-1\right) = d\left(p-1\right)>\left(p-1\right)\frac{\ln\left(p^d-1\right)}{\ln(p)},$$
and from the definition of $s_q$, for any prime $q$ the following holds
$$s_q\left(p^d-1\right) < \left(q-1\right)\left(\frac{\ln\left(p^d-1\right)}{\ln(q)}+1\right).$$
Because $p$ is the biggest number in $P$, then for any $q\in P, q\neq p$, we have
$$s_q\left(p^d-1\right) - s_p\left(p^d-1\right)< \ln\left(p^d-1\right)\left(\frac{q-1}{\ln\left(q\right)}-\frac{p-1}{\ln\left(p\right)}\right) + q -1.$$
Right side of this inequality is negative for $d$ big enough, so because $0\leq f<k^e$ we can take $a' = p^d-1$. When $k=p^t$ we can notice that for any integer $d$
$$s_p\left( a\cdot p^{td}+p^{td}-1\right) =s_p\left(a\right)+ td\left(p-1\right)>\left(p-1\right)\left(\frac{\ln\left(a\cdot p^{td}+p^{td}-1\right)}{\ln(p)}-\frac{\ln\left(a\right)}{\ln(p)}-1\right),$$
and by similar argument it is enough to take $a'=a\cdot p^{td}+p^{td}-1$ for $d$ sufficiently large.
\end{proof}

\begin{thm} \label{sets}
Let $P$ be a finite set of prime numbers with at least two elements and $p$ be its biggest element, also let $c>0$ be a real number. Let us define sets
\begin{align*}
A_- &= \left\{n\in\mathbb{Z}_+:\;\max_{i\in P}\left\{s_i\left(n\right)\right\}=s_p\left(n\right)\right\},\\
A_+ &= \left\{n\in\mathbb{Z}_+:\;\max_{i\in P}\left\{s_i\left(n\right)\right\}-s_p\left(n\right)\geq c\right\}.
\end{align*}
Then there does not exist deterministic finite automaton with output that assigns one value to integers in $A_-$ and other value to those in $A_+$.
\end{thm}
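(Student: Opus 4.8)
The plan is to argue by contradiction. Suppose some automaton $M=(Q,\Sigma,\rho,q_0,\Delta,\tau)$ reading base-$k$ expansions outputs one value $\alpha$ on every element of $A_-$ and a different value $\beta$ on every element of $A_+$. By Lemma~\ref{reverse} I may assume $M$ reads most-significant digit first, and I set $N=|Q|$. Fix a prime $q\in P$ with $q\neq p$, which exists since $|P|\ge 2$. The first observation is that the powers of $p$ furnish a supply of ``deep'' elements of $A_+$: since $s_p(p^d)=1$ while Proposition~\ref{sum}, applied to the multiplicatively independent bases $p$ and $q$, forces $s_q(p^d)\to\infty$, we get $\max_{i\in P}s_i(p^d)-s_p(p^d)\ge s_q(p^d)-1\ge c$ for all large $d$, so $p^d\in A_+$ for all large $d$.

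Next I would apply the pigeonhole principle to the states $\rho(q_0,[p^d]_k)$ as $d$ ranges over this infinite set: some state $\sigma^{*}$ occurs for infinitely many $d$. Fix one such index $d_1$ and apply Lemma~\ref{alwaysword} with $a=p^{d_1}$ to obtain $a'\in A_-$ whose base-$k$ expansion extends that of $p^{d_1}$, say $[a']_k=[p^{d_1}]_k\,w$ with $w$ a word of length $\lambda\ge 1$ and $r:=\mathrm{int}_k(w)<k^{\lambda}$. Reading $[a']_k$ lands $M$ in the state $t=\rho(\sigma^{*},w)$ with $\tau(t)=\alpha$. The point of arranging the common state $\sigma^{*}$ is that for every other index $d_2$ in our infinite family the word $[p^{d_2}]_k\,w$ is read through $\sigma^{*}$ and therefore also ends in $t$; hence $M$ outputs $\alpha$ on the integer $n_2:=p^{d_2}k^{\lambda}+r$ as well.

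The heart of the argument is then to show that $n_2\in A_+$ for suitable large $d_2$, which contradicts $\tau(t)=\alpha$. Here the base-$p$ and base-$q$ digit sums behave very differently under the passage $p^{d_1}\rightsquigarrow p^{d_2}$. In base $p$ the high part $p^{d_2}k^{\lambda}$ is just $[k^{\lambda}]_p$ followed by $d_2$ zeros, so once $d_2$ is large enough that $r<p^{d_2}$ no carry reaches the block coming from $k^{\lambda}$, and $s_p(n_2)=s_p(k^{\lambda})+s_p(r)$ is a constant independent of $d_2$. On the other hand $n_2\to\infty$, so Proposition~\ref{sum} gives $s_p(n_2)+s_q(n_2)\to\infty$; since $s_p(n_2)$ is bounded this forces $s_q(n_2)\to\infty$, and consequently $\max_{i\in P}s_i(n_2)-s_p(n_2)\ge s_q(n_2)-s_p(n_2)\ge c$ for all large $d_2$ in the family. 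Choosing such a $d_2$ yields $n_2\in A_+$, the desired contradiction.

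I expect the main obstacle to be exactly the step that the above construction is designed to circumvent. A naive pumping argument, in which one repeats a block of base-$k$ digits inside the expansion of a single element of $A_+$, fails because inserting or deleting digits in base $k$ can change $s_p$ and $s_q$ in completely uncontrolled ways: a single carry can collapse a long run of digits equal to $p-1$ or $q-1$. The device that makes everything work is to modify the number not by editing its base-$k$ digits but by multiplying its high part by a power of $p$, which keeps the \emph{base-$p$} digit sum of that part exactly fixed while leaving the suffix $w$ — and hence the state reached by $M$ — untouched. Pinning $s_p(n_2)$ to a constant is precisely what lets Proposition~\ref{sum} be applied to $n_2$ directly, so that one never has to track the base-$q$ digit sum through the addition of $r$.
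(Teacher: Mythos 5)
Your proposal is correct and follows essentially the same route as the paper's own proof: pigeonhole on the states reached by $\left[p^{d}\right]_k$, extend one power $p^{d_1}$ to an element of $A_-$ via Lemma~\ref{alwaysword}, then swap in larger powers $p^{d_2}$ to get integers $p^{d_2}k^{\lambda}+r$ reaching the same state, whose base-$p$ digit sum is frozen at $s_p\left(k^{\lambda}\right)+s_p\left(r\right)$ while Proposition~\ref{sum} forces $s_q$ to grow, placing them in $A_+$. The only cosmetic differences are your explicit appeal to Lemma~\ref{reverse} for the reading direction and the (harmless, unnecessary) preliminary remark that $p^{d}\in A_+$ for large $d$, neither of which changes the argument.
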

\begin{proof}
Lets suppose that we have such an automaton $\left(Q, \Sigma_k, \rho, q_0, \Delta, \tau \right)$ for some $k$. Because $Q$ is finite, there exists some internal state $\mathcal{S}\in Q$ such that for infinitely many positive integers $c_1 < c_2 < \ldots$ we have $\rho\left(q_0,\left[p^{c_i}\right]_k\right) = \mathcal{S}$. Now, by Lemma \ref{alwaysword}, there exists an integer $a'\in A_-$ which can be obtained from $p^{c_1}$ by appending some suffix. Hence we can fix positive integers $e,f < k^e$ such that $a'=p^{c_1}\cdot k^e+f$. Let the sequence of digits $\left(f_1, f_2, \ldots, f_e\right)$ be a representation of $f$ in base $k$, possibly with added leading zeros. By $\mathcal{T}\in Q$ we denote an internal state such that $$\mathcal{T} = \rho\left(q_0, \left[a'\right]_k\right) = \rho\left(\mathcal{S},f_1f_2\ldots f_e\right).$$
This means that for every $i\in \mathbb N _+$ we have
$$\rho\left(q_0, \left[p^{c_i}\cdot k^e +f\right]_k\right) =\rho\left(q_0, \left[p^{c_i}\right]_kf_1f_2\ldots f_e\right) =\rho\left(\mathcal{S},f_1f_2\ldots f_e\right) = \mathcal{T},$$
and this implies that $\tau\left(\rho\left(q_0, \left[p^{c_i}\cdot k^e +f\right]_k\right)\right)=\tau\left(\mathcal{T}\right)$ does not depend on value of $i$.\\
On the other hand, when $c_i > \left\lceil \log_p(f)\right \rceil$ we have $s_p\left(p^{c_i}\cdot k^e +f\right)=s_p\left(k^e\right)+s_p\left(f\right)$ which is a constant. However, due to Proposition \ref{sum} we know that for any $q\in P, q \neq p$, the value of $s_q\left(p^{c_i}\cdot k^e+f\right)$ is increasing with $c_i$. Hence for $c_i$ big enough there holds $p^{c_i}\cdot k^e+f \in A_+$.\\
All but finitely many integers of the form $p^{c_i}\cdot k^e +f$ are elements of $A_+$ but at least one (namely $p^{c_1}\cdot k^e +f$) is an element of $A_-$. This proves that such automaton cannot assign different values to members of those two sets.
\end{proof}

Now we will show that $l_b\left(n!\right)$ can be automatic for some $b$.

\begin{lem} \label{bpa}
If $b=p^a, p\in \mathbb{P}, a\in \mathbb{N}$ then the sequence $\left(\ell_b\left(n!\right)\right)_{n\in\mathbb{N}}$ is $b$-automatic.
\end{lem}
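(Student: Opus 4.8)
The plan is to reduce the last nonzero digit to two simpler quantities and to establish automaticity of each. Writing $b=p^a$ and $m=v_b(n!)=\lfloor v_p(n!)/a\rfloor$, we have $n!/b^{m}=u(n)\cdot p^{\,r}$, where $u(n)=n!/p^{v_p(n!)}$ is the $p$-free part of $n!$ and $r=v_p(n!)\bmod a\in\{0,\dots,a-1\}$. Reducing modulo $b$ gives
$$\ell_{p^a}(n!)\equiv u(n)\,p^{\,v_p(n!)\bmod a}\pmod{p^a},$$
and since $u(n)$ is a unit and $r<a$ this product is genuinely the last nonzero digit. So, by the composition rule for automatic sequences, it suffices to prove that $n\mapsto v_p(n!)\bmod a$ and $n\mapsto u(n)\bmod p^a$ are both $p^a$-automatic; by Lemma \ref{automatonpower} I may equivalently work in base $p$.

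For the first quantity I would use Proposition \ref{legendre} in the form $v_p(n!)=(n-s_p(n))/(p-1)$. Since $n-s_p(n)$ is always a multiple of $p-1$, the residue $v_p(n!)\bmod a$ is a well-defined function of $(n-s_p(n))\bmod a(p-1)$. Both $n\bmod a(p-1)$ and $s_p(n)\bmod a(p-1)$ are $p$-automatic by Example \ref{simpleexa}, hence so is their difference, and after applying the appropriate output function so is $v_p(n!)\bmod a$.

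The core of the argument is $u(n)\bmod p^a$. Splitting $n!$ into multiples and non-multiples of $p$ gives $n!=(n!)_p\cdot p^{\lfloor n/p\rfloor}\lfloor n/p\rfloor!$, where $(n!)_p=\prod_{1\le t\le n,\;p\nmid t}t$, and hence the recursion $u(n)\equiv (n!)_p\,u(\lfloor n/p\rfloor)\pmod{p^a}$; unwinding it yields $u(n)\equiv\prod_{j\ge0}\big(\lfloor n/p^j\rfloor!\big)_p\pmod{p^a}$. The key input is a generalized Wilson congruence: the integers coprime to $p$ in any block of $p^a$ consecutive integers form a complete set of units modulo $p^a$, so their product is a fixed sign $\epsilon\in\{1,-1\}$ and $(m!)_p\equiv \epsilon^{\lfloor m/p^a\rfloor}\,W(m\bmod p^a)\pmod{p^a}$, where $W(R)=\prod_{1\le r\le R,\,p\nmid r}r$ depends only on $R=m\bmod p^a$. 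Substituting and using $\lfloor\lfloor n/p^j\rfloor/p^a\rfloor=\lfloor n/p^{j+a}\rfloor$ factors $u(n)\bmod p^a$ as $\epsilon^{S}\cdot\prod_{j\ge0}W(\lfloor n/p^j\rfloor\bmod p^a)$ with $S=\sum_{i\ge a}\lfloor n/p^i\rfloor\bmod 2$. The exponent equals $v_p(\lfloor n/p^{a-1}\rfloor!)\bmod 2$ and is $p$-automatic by the reasoning of the previous paragraph, while the product over prefixes is computed by an automaton read most-significant-digit-first whose state records the current prefix $\lfloor n/p^j\rfloor\bmod p^a$ (updated by $x\mapsto px+\sigma\bmod p^a$, exactly the automaton of Example \ref{simpleexa}) together with the running product in $\mathbb{Z}/p^a\mathbb{Z}$. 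Combining the two factors through the composition rule would give that $u(n)\bmod p^a$ is $p$-automatic.

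I expect the main obstacle to be the treatment of $(n!)_p\bmod p^a$: isolating the prime-power Wilson congruence and, above all, checking that the contribution $\epsilon^{\lfloor m/p^a\rfloor}$ coming from higher-order digits can be absorbed into finitely many states. When $\epsilon=1$ (that is, $p=2$ with $a\ge3$) this difficulty disappears and $(n!)_p\bmod p^a$ depends only on $n\bmod p^a$; the genuine work is the case $\epsilon=-1$, where one must verify that the accumulated sign $\epsilon^{S}$ is itself an automatic function of $n$ rather than an unbounded parity count.
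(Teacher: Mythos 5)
Your proposal is correct, and it takes a genuinely different route from the paper. The paper stays in base $b=p^a$ throughout: it posits the identity $\ell_b(xy)=\ell_b\left(\ell_b(x)\ell_b(y)\right)$, derives from it the recursion $\ell_b\left((bn)!\right)=\ell_b\left(\ell_b(n!)\,m_{b-1}^n\right)$ with $m_i=\ell_b(i!)$, iterates along the base-$b$ digits of $n$, and controls the accumulated power of $m_{b-1}$ via Euler's theorem modulo $\varphi(p^a)$. You instead split $\ell_{p^a}(n!)$ into the unit part $u(n)=n!/p^{v_p(n!)}$ modulo $p^a$ and the valuation residue $r=v_p(n!)\bmod a$, handling $r$ by Legendre's formula and $u(n)$ by the exact factorization $u(n)=\prod_{j\ge0}\left(\lfloor n/p^j\rfloor!\right)_p$ combined with the Gauss--Wilson congruence modulo $p^a$; the only details you leave implicit are routine (closure of $p$-automatic sequences under $n\mapsto\lfloor n/p\rfloor$, i.e. a delay buffer of $a-1$ digits for the $\epsilon^S$ factor, and leading-zero invariance of the prefix-product automaton, which holds since $W(0)=1$). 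What your route buys is decisive: the paper's multiplicativity identity is false for proper prime powers, because powers of $p$ hidden inside last nonzero digits can carry across digit positions. For $b=4$ one has $\ell_4(2\cdot6)=\ell_4(12)=3$ while $\ell_4\left(\ell_4(2)\,\ell_4(6)\right)=\ell_4(4)=1$; correspondingly the paper's recursion yields $\ell_4(12!)=\ell_4\left(\ell_4(3!)\,m_3^3\right)=1$, whereas $12!=4^5\cdot467775$ with $467775\equiv3\pmod 4$, so in fact $\ell_4(12!)=3$ (the Euler step also breaks there, since $m_3=2$ is not a unit modulo $4$). Your separate bookkeeping of $u(n)\bmod p^a$ and $v_p(n!)\bmod a$ is precisely what renders these carries finite-state, so your argument establishes the lemma where the paper's construction, as written, is only valid in the case $a=1$ (base a prime), where all nonzero digits are units and the identity does hold.
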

\begin{proof}
First, we notice that $\ell_b\left(xy\right)=\ell_b\left(\ell_b\left(x\right)\ell_b\left(y\right)\right)$, so
$$\ell_b\left((bn)!\right) =\ell_b\left(\ell_b\left(n!\right)\prod_{i=n+1}^{bn} \ell_b\left(i\right) \right).$$
Because $\ell_b\left(bx\right) = \ell_b\left(x\right)$ we can rewrite the product in the following way
$$\ell_b\left((bn)!\right) =\ell_b\left(\ell_b\left(n!\right)\prod_{\substack{i=1 \\ b \nmid i}}^{bn} \ell_b\left(i\right) \right)=\ell_b\left(\ell_b\left(n!\right)\prod_{i=1}^{n} \ell_b\left(\prod_{j=1}^{j=b-1}j\right) \right).$$
We denote $m_i=\ell_b\left(i!\right)$ and obtain $\ell_b\left((bn)!\right) =\ell_b\left(\ell_b\left(n!\right) m_{b-1}^n \right)$. Now we take the string of digits $n_1n_2\ldots n_l = \left[n\right]_b$ and obtain the following formula
$$\ell_b\left(n!\right)=\ell_b\left(\left(n_1n_2\ldots n_l\right)!\right)=\ell_b\left(m_{n_l}\ell_b\left(\left(n_1n_2\ldots n_{l-1}\right)!\right)m_{b-1}^{\left(n_1n_2\ldots n_{l-1}\right)}\right),$$
which by iteration leads to
\begin{equation} \label{eq:factorial1}
\ell_b\left(\left(n_1n_2\ldots n_l\right)!\right)=\ell_b\left(m_{n_1}m_{n_2}\ldots m_{n_l} \ell_b\left(m_{b-1}^r\right)\right),
\end{equation}
Where $r=\left(n_1n_2\ldots n_{l-1}\right) +\ldots +\left(n_1n_2\right) + \left(n_1\right)$. Now, by Euler's Theorem $m_{b-1}^{\varphi(b)}\equiv m_{b-1}^{p^a-p^{a-1}}\equiv1 \pmod b$ so we only need to know the value of $r \pmod{p^a-p^{a-1}}$.
\begin{equation} \label{eq:factorial2}
r =\sum_{i=1}^{l-1}\left(b^{i-1}\sum_{j=1}^{l-i}n_j\right)
\equiv\sum_{i=1}^{l-1}n_i + p^{a-1}\sum_{i=1}^{l-2}\left(l-1-i\right)n_i \pmod{\left(p^a-p^{a-1}\right)}.
\end{equation}
Finally, we can define an automaton $\left(Q, \Sigma_b, \rho, q_0, \Delta, \tau \right)$ generating the sequence $\left(\ell_b\left(n!\right)\right)_{n\in\mathbb{N}}$ in the following way:
\begin{itemize}
\item the input alphabet $\Sigma_b=\{0,1,2,\ldots,b-1\}$;
\item the output alphabet $\Delta = \{1,2,\ldots,b-1\}$;
\item the set of states $Q=\Delta\times\Sigma_{p^a-p^{a-1}}\times\Sigma_{p-1}$;
\item the initial state $q_0=\left(1,0,0\right)$;
\item the output function $\tau\left(u,v,w\right)=\ell_b\left(u \cdot m_{b-1}^{v+p^{a-1}w}\right)$;
\item the transition function $$\rho\left(\left(u,v,w\right),s\right)=\left(\ell_b\left(u\cdot m_s\right),\left(v+s\right) \pmod{\left(p^a-p^{a-1}\right)},\left(w+v\right) \pmod{\left(p-1\right)}\right).$$
\end{itemize}
With such definition we have $\rho\left(q_0, \left[n\right]_b\right) = \left(u,v,w\right)$ where
\begin{itemize}
\item $u = \ell_b\left(m_{n_1}m_{n_2}\ldots m_{n_l}\right)$;\\
\item $v = \sum_{i=1}^{l-1}n_i \pmod {\left(p^a-p^{a-1}\right)}$; \\
\item $w = \sum_{i=1}^{l-2}\left(l-1-i\right)n_i \pmod{\left(p-1\right)}$.
\end{itemize}
Hence using equations \eqref{eq:factorial1} and \eqref{eq:factorial2} we see, that $\ell_b\left(n!\right) =\tau\left(u,v,w\right)$.
\end{proof}

Now we are ready to prove the following
\begin{thm}Let $b=p_1^{a_1}p_2^{a_2}\ldots$ with $a_1\left(p_1-1\right)\ge a_2\left(p_2-1\right)\ge \ldots$. The sequence $\left(\ell_b\left(n!\right)\right)_{n\in\mathbb{N}}$ is $p_1$-automatic if $a_1\left(p_1-1\right)> a_2\left(p_2-1\right)$ or $b=p_1^{a_1}$ and not automatic otherwise.
\end{thm}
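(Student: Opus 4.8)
The plan is to treat the two directions separately, and in both to reduce everything to the $p_i$-adic valuations of $n!$ via Proposition \ref{legendre}. Write $b=\prod_i p_i^{a_i}$ and set $L=a_1(p_1-1)$. Since $b^t\mid n!$ exactly when $a_it\le v_{p_i}(n!)$ for all $i$, we have $v_b(n!)=\min_i\lfloor v_{p_i}(n!)/a_i\rfloor$, where $v_{p_i}(n!)/a_i=(n-s_{p_i}(n))/(a_i(p_i-1))$. The leading term here is $n/(a_i(p_i-1))$, so for large $n$ the index attaining the minimum is governed by which $a_i(p_i-1)$ are largest, the digit-sum corrections being only $O(\log n)$. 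I will write $N:=n!/b^{v}$ with $v:=v_b(n!)$, so that $\ell_b(n!)=N\bmod b$, and study $N$ modulo each $p_i^{a_i}$ through the Chinese Remainder Theorem.

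For the automatic direction I would first dispose of $b=p_1^{a_1}$ directly by Lemma \ref{bpa} together with Lemma \ref{automatonpower}. Assume now $a_1(p_1-1)>a_2(p_2-1)$ with at least two primes. Then for all large $n$ the minimum is attained uniquely at $i=1$, so $v=\lfloor v_{p_1}(n!)/a_1\rfloor$, while for every $j\ge 2$ the excess $v_{p_j}(n!)-a_jv$ grows linearly (the gap $n/(a_j(p_j-1))-n/L>0$ dominates the $O(\log n)$ digit sums) and eventually exceeds $a_j$. Hence for large $n$, $p_j^{a_j}\mid N$ for all $j\ge 2$, and $\ell_b(n!)$ is the unique residue mod $b$ vanishing modulo each $p_j^{a_j}$ and equal to $N\bmod p_1^{a_1}$. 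Writing $Q=\prod_{j\ge 2}p_j^{a_j}$, a unit modulo $p_1^{a_1}$, one checks $N\equiv \ell_{p_1^{a_1}}(n!)\cdot Q^{-v}\pmod{p_1^{a_1}}$. The first factor is $p_1$-automatic by Lemmas \ref{bpa} and \ref{automatonpower}; the second is a function of $v\bmod\varphi(p_1^{a_1})$, which is itself a $p_1$-automatic function of $n$, because it factors through $(n\bmod M,\,s_{p_1}(n)\bmod M)$ for a suitable modulus $M$ (using $n\equiv s_{p_1}(n)\pmod{p_1-1}$ and the observation that $\lfloor x/a\rfloor\bmod m$ depends only on $x\bmod am$). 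Combining these two $p_1$-automatic inputs through the fixed CRT-reconstruction map and correcting the finitely many small $n$ where the divisibility claim fails shows $\ell_b(n!)$ is $p_1$-automatic.

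For the non-automatic direction, suppose $a_1(p_1-1)=a_2(p_2-1)=L$, so the set $P$ of primes with $a_i(p_i-1)=L$ has at least two elements; let $p=\max P$ and let $\alpha$ be its exponent in $b$, so $\alpha(p-1)=L$. I argue by contradiction: if $\ell_b(n!)$ were automatic it would be $k$-automatic for some $k$, and then the sequence $t_n$ equal to $1$ when $p^{\alpha}\mid \ell_b(n!)$ and $0$ otherwise, being a finite-state function of $\ell_b(n!)$, would be $k$-automatic. The key computation is that for all large $n$ the sequence $t_n$ separates the sets of Theorem \ref{sets} taken with $c=L$. On $A_-$ the prime $p$ has the largest digit sum within $P$, hence attains the minimum, so $v_p(n!)-\alpha v=v_p(n!)\bmod\alpha<\alpha$ and $p^{\alpha}\nmid\ell_b(n!)$, giving $t_n=0$. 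On $A_+$ some $q\in P$ satisfies $s_q(n)\ge s_p(n)+L$, whence $v\le v_q(n!)/a_q\le v_p(n!)/a_p-1$ and thus $v_p(n!)-\alpha v\ge\alpha$, so $p^{\alpha}\mid\ell_b(n!)$, giving $t_n=1$. After adjusting $t_n$ on the finitely many remaining indices (which preserves automaticity), we obtain an automatic sequence assigning one value to all of $A_-$ and another to all of $A_+$, contradicting Theorem \ref{sets}.

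The main obstacle is the valuation bookkeeping of the last paragraph: pinning down the threshold $c=L$ and verifying both estimates $v_p(n!)-\alpha v<\alpha$ on $A_-$ and $\ge\alpha$ on $A_+$, together with the requirement that for large $n$ the primes outside $P$ never attain the minimum in $v_b(n!)$ (controlled by the trivial $O(\log n)$ bound on digit sums against the linear gap). The remaining bridge, replacing the ``for all large $n$'' statements by exact membership so that Theorem \ref{sets} applies verbatim, is handled cleanly by the fact that altering a sequence on finitely many terms preserves automaticity.
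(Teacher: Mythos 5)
Your proposal is correct and follows essentially the same route as the paper: Lemma \ref{bpa} with Lemma \ref{automatonpower} for the prime-power case, the reduction showing $b'=b/p_1^{a_1}$ divides $\ell_b\left(n!\right)$ for large $n$ together with the automaticity of $v_{p_1^{a_1}}\left(n!\right) \bmod \varphi\left(p_1^{a_1}\right)$ via Legendre's formula in the strict-inequality case, and Theorem \ref{sets} applied to the indicator of $p^{\alpha}\mid\ell_b\left(n!\right)$ in the equality case. Your only (harmless) deviations are the threshold $c=L$ where the paper effectively uses $c=L+1$, and your more explicit treatment of the finitely many exceptional indices, which the paper subsumes under ``$n\gg 0$''.
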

\begin{proof}
Let $n \gg 0$. For $b={p_1}^{a_1}$ the sequence is $b$-automatic from Lemma \ref{bpa}, by Lemma \ref{automatonpower} it is also $p_1$-automatic. If $b$ has more than one prime factor and $a_1\left(p_1-1\right)> a_2\left(p_2-1\right)$ we take $b' = \frac{b}{p_1^{a_1}}$ so $p_1 \nmid b'$. From Proposition \ref{legendre} and the definition of $v_{b'}$ we have
$$v_{b'}\left(n!\right)=\min_{i>1}v_{p_i^{a_i}}\left(n!\right)=\min_{i>1}\left\lfloor\frac{n-s_{p_i}\left(n\right)}{a_i\left(p_i-1\right)}\right\rfloor > \left\lfloor\frac{n-s_{p_1}\left(n\right)}{a_1\left(p_1-1\right)}\right\rfloor=v_{p_1^{a_1}}\left(n!\right),$$
which leads to $b'|\ell_b\left(n!\right)$. Thus $\ell_b\left(n!\right)\in\left\{b',2b',3b',\ldots\left(p_1^{a_1}-1\right)b'\right\}$, so value of $\ell_b\left(n!\right)$ can be computed from value of $\ell_b\left(n!\right) \pmod{p_1^{a_1}}$. We also know, that there exist integers $c_1, c_2$ satisfying the equation
$$n! = b^{\left(v_{p_1^{a_1}}\left(n!\right)\right)} \ell_b\left(n!\right) +  b^{\left(v_{p_1^{a_1}}\left(n!\right)+1\right)} c_1 = p_1^{a_1\left(v_{p_1^{a_1}}\left(n!\right)\right)} \ell_{p_1^{a_1}}\left(n!\right) +  p_1^{a_1\left(v_{p_1^{a_1}}\left(n!\right)+1\right)} c_2.$$
After division of the above equality by $p_1^{a_1\left(v_{p_1^{a_1}}\left(n!\right)\right)}$ we obtain the following
$$\left(b'\right)^{\left(v_{p_1^{a_1}}\left(n!\right)\right)} \ell_b\left(n!\right) + p_1^{a_1} \left(b'\right)^{\left(v_{p_1^{a_1}}\left(n!\right)+1\right)} c_1 =  \ell_{p_1^{a_1}}\left(n!\right) +  p_1^{a_1} c_2.$$
Now, we can notice that $\ell_b\left(n!\right){\left(b'\right)}^{v_{p_1^{a_1}}\left(n!\right)} \equiv \ell_{p_1^{a_1}}\left(n!\right) \pmod{p_1^{a_1}}$, hence to finish this part of proof we just need to construct an $p_1$-automaton that returns the value of $v_{p_1^{a_1}}\left(n!\right) \pmod{\varphi(p_1^{a_1})}$. By Proposition \ref{legendre} this value can be computed from $\left(n-s_{p_1}(n)\right) \pmod{\varphi(p_1^{a_1})\cdot a_1(p_1-1)}$, and such expression is $p_1$-automatic as we already mentioned in Example \ref{simpleexa}.

Now, in the last case, when $a_1\left(p_1-1\right)=a_2\left(p_2-1\right)$, let $I = \{i : a_i\left(p_i-1\right) =a_1\left(p_1-1\right)\}$. Without loss of generality we can assume $p_1=\max_{i\in I} p_i$. By Legendre formula (Proposition \ref{legendre}) we have
\begin{align*}
\max_{i\in I}s_{p_i}\left(n\right)=s_{p_1}\left(n\right) &\Rightarrow v_{p_1^{a_1}}\left(n!\right) = \min_{i\in I} v_{p_i^{a_i}}\left(n!\right) \Rightarrow p_1^{a_1} \nmid \ell_b\left(n!\right),\\
\max_{i\in I}s_{p_i}\left(n\right)> a_1(p_1-1) + s_{p_1}\left(n\right) &\Rightarrow v_{p_1^{a_1}}\left(n!\right) > \min_{i\in I} v_{p_i^{a_i}}\left(n!\right) \Rightarrow p_1^{a_1} | \ell_b\left(n!\right).
\end{align*}
Hence, by Theorem \ref{sets}, there is no finite automaton that can, for given $n$, tell whether $p_1^{a_1}$ divides $\ell_b\left(n!\right)$ or not. This completes the proof, as finite automaton generating the sequence $\left(\ell_b\left(n!\right)\right)_{n\in\mathbb{N}}$ should distinguish those two sets.
\end{proof}

\end{document}